\theoremstyle{plain}
\newtheorem{theorem}{Theorem}
\newtheorem{lemma}[theorem]{Lemma}
\theoremstyle{definition}
\theoremstyle{remark}
\title{How Ramsey theory can be used to solve Harary's  problem for $K_{2,k}$}
\author{Chula Jayawardene\\
\small  Department of Mathematics\\[-0.8ex]
\small  University of Colombo, Colombo\\[-0.8ex] 
\small Sri Lanka.\\
\small\tt c\_jayawardene@yahoo.com\\
\\
Cecil C. Rousseau and B\'ela Bollob\'as\\
\small  Department of Mathematics\\[-0.8ex]
\small  University of Memphis\\[-0.8ex] 
\small  U.S.A.\\
}
\begin{document}

\maketitle

\begin{abstract}
Harary's conjecture $r(C_3,G)\leq 2q+1$ for every isolated-free graph G with $q$ edges was proved independently by Sidorenko  and Goddard and Klietman.  In this paper instead of $C_3$ we consider $K_{2,k}$ and seek a sharp upper bound for $r(K_{2,k},G)$ over all graphs $G$ with $q$ edges. More specifically if $q\geq 2$, we will show that $r(C_4,G)\leq kq+1$ and that equality holds  if  $G \cong  qK_2$ or $K_3$. Using this we will generalize this result for $r(K_{2,k},G)$ when $k>2$.  We will also show that for every graph $G$ with $q \geq 2$ edges and with no isolated vertices, $r(C_4, G) \leq 2p+ q -  2$ where $p=|V(G)|$ and that equality holds if  $G \cong K_3$.
\end{abstract}

\section*{Introduction}

\vspace{5pt}

At a meeting held at Kent State University in 1980, Harary posed the general problem of determining the relationship between $r(H, G)$ and the sizes (number of edges) of the given graphs. He conjectured that $r(K_3, G) \leq 2q + 1$ for every isolate-free graph $G$ with $q$ edges. This bound is sharp since $r(K_3, T) = 2q + 1$ for any tree $T$ with $q$ edges; also $r(K_3, qK_2) = 2q + 1$. Harary's conjecture was subsequently proved independently by Sidorenko \cite{Sidorenko1991, Sidorenko1993} and by Goddard and Kleitman \cite{Goddard1994s}. More generally, we can take $H$ to be any fixed graph and seek a sharp upper bound for $r(H, G)$ over all graphs $G$ with $q$ edges. In this paper we deal with the case $H=K_{2,k}$ (with $k\geq 2$) and show that these bounds are sharp. We determine all graphs $G$ where the bound is achieved. Prior to this through private communication it is known that the main Theorem had been proved by Bollab{\'a}s and Szemer{\'e}di, using results in extremal graph theory. 

\vspace{15pt}
\section{An upper bound to Ramsey number $r(C_4,G)$}

\begin{theorem}
\label{t1}
For every graph $G$ with $|E(G)|=q \geq 2$ edges and with no isolated vertices, $ $ $   $  $r(C_4, G) \leq 2q + 1$. Equality holds if $G \cong qK_2$ or $K_3$.
\end{theorem}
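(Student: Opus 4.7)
The plan is to induct on $q \geq 2$. For the base case $q = 2$, where $G \in \{P_3, 2K_2\}$, Reiman's bound on $C_4$-free graphs forces any red $C_4$-free subgraph of $K_5$ to have at most $\lfloor \frac{5(1+\sqrt{17})}{4}\rfloor = 6$ edges, so the blue graph has at least $4$ edges on $5$ vertices; a short case check then shows such a blue graph always contains both $P_3$ and $2K_2$.

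For the inductive step ($q \geq 3$) I would pick a vertex $u \in V(G)$ of minimum degree $d := d_G(u) \geq 1$, and let $\ell$ be the number of pendant neighbors of $u$ in $G$. Deleting $u$ together with these pendants yields a graph $G''$ with $q'' = q - d$ edges and no isolated vertices (removing a pendant affects only $u$, and non-pendant neighbors of $u$ retain degree $\geq 1$). When $q'' \geq 2$, the induction hypothesis gives $r(C_4, G'') \leq 2q''+1 \leq 2q-1$; the small cases $q'' \leq 1$ are trivial because $2q+1 \geq 5 > r(C_4, K_2) = 4$. Thus any $2$-coloring of $K_{2q+1}$ with no red $C_4$ contains a blue copy $H''$ of $G''$, and the proof reduces to placing the image of $u$ at an unused vertex $t \in V(K_{2q+1}) \setminus V(H'')$ that is blue-adjacent to the $d-\ell$ images in $H''$ of the non-pendant neighbors of $u$, and then placing the $\ell$ pendant leaves at further unused vertices blue-adjacent to $t$.

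The main obstacle is exactly this extension step. If no candidate $t$ is blue-adjacent to all the target images $w_1,\dots,w_{d-\ell}$, then every unused vertex has a red edge to some $w_i$. The $C_4$-freeness of the red graph forces any two $w_i$ to share at most one common red neighbor, so a double count of red edges between $\{w_1,\dots,w_{d-\ell}\}$ and the unused set, combined with Reiman's bound $e_R \leq \frac{n(1+\sqrt{4n-3})}{4}$ and the lower bound $|V(K_{2q+1}) \setminus V(H'')| \geq 2q + 2 - |V(G)|$, should either yield an outright contradiction or enough flexibility to re-choose the blue copy of $G''$ so that the extension does succeed; a similar but easier analysis accommodates the pendant leaves around $t$.

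For the equality cases I would exhibit explicit $2$-colorings of $K_{2q}$. When $G \cong qK_2$, fix a vertex $v$, color every edge incident to $v$ red and every other edge blue: the red graph is $K_{1,2q-1}$ (a star, hence $C_4$-free) and the blue graph is $K_{2q-1}$ on $V \setminus \{v\}$, whose matching number is $q-1 < q$. When $G \cong K_3$, the classical equality $r(C_4, K_3) = 7 = 2 \cdot 3 + 1$ of Chv\'atal--Harary supplies the other extremal example.
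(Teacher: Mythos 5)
Your induction scheme has a genuine gap at exactly the point you flag as ``the main obstacle,'' and the double-counting idea you sketch there cannot close it. After finding a blue copy $H''$ of $G''$, the failure of the extension only tells you that every unused vertex sends \emph{at least one} red edge into the target set $\{w_1,\dots,w_{d-\ell}\}$. That is one red edge per unused vertex, which is perfectly consistent with the red graph being $C_4$-free: for instance, all unused vertices could be red-adjacent to $w_1$ alone, forming a red star. The condition that two $w_i$'s share at most one common red neighbor, Reiman's bound, and the count of unused vertices (which can be as small as $2+\ell$ when $|V(G)|$ is close to $2q$, e.g.\ for $G\cong qK_2$) give no contradiction, and ``re-choose the blue copy of $G''$'' is not a defined operation --- the induction hypothesis hands you one copy with no control over where it sits. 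The paper avoids this trap by deleting a vertex $v$ of \emph{maximum} degree and, crucially, by arranging that the obstructing set $Y$ consists of $\Delta(G)+1$ \emph{red neighbors of a single vertex} $w$: it first shows $\delta(R)\geq 2\Delta(G)-1$ (otherwise a low-red-degree vertex extends a blue $H$ found after deleting its whole red neighborhood), then deletes $w$ together with $\Delta(G)+1$ red neighbors $Y$, finds a blue $H$ in the remainder, and notes that each $y\in Y$ must be red-adjacent to some $x\in X=N_v$; pigeonhole over $|Y|=\Delta+1>\Delta=|X|$ gives an $x$ with two red neighbors $y_1,y_2\in Y$, and the red $C_4$ is $w y_1 x y_2 w$ --- the common red neighbor $w$ is what turns the pigeonhole into a $C_4$. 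Your setup has no such common vertex, so the pigeonhole has nothing to close up. (The paper also handles pendant vertices by a separate argument bounding $\Delta(R)$ rather than $\delta(R)$, and treats $qK_2$, paths, cycles, stars and trees by direct known bounds.)

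Two smaller points. In your base case, the edge count alone does not force a blue $2K_2$: a blue $K_{1,4}$ has four edges on five vertices and no $2K_2$, so you must additionally invoke the red $C_4$-freeness (the complement $K_4\cup K_1$ contains a $C_4$); the known value $r(C_4,2K_2)=5$ is the cleaner citation. Your lower-bound constructions for the equality cases ($R\cong K_{1,2q-1}$ in $K_{2q}$ for $qK_2$, and $r(C_4,K_3)=7$) are correct and match the paper.
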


\begin{proof} As we know that $r(C_4, P_3) = 4$, $r(C_4, K_3) = 7$,  $r(C_4, C_4) = 6$, $r(C_4, 3K_2) = 7$, $r(C_4, K_{1,3}) = 6$,  $r(C_4, K_2 \cup P_3) = 6$, $r(C_4, 2P_3)=7$, $r(C_4, 2K_2) = 5$, $r(C_4, K_{1,4}) = 7$, $r(C_4, K_2 \cup C_3) = 7$, $r(C_4, K_2 \cup K_{1,3}) = 7$,    $r(C_4, 2K_2 \cup P_3) =8$, $r(C_4, K_{1,3}+e) = 7$, $r(C_4, T_3) = 6$ (here $T$ represents the tree on 5 vertices containing exactly one vertex of degree 3), $r(C_4, 4K_2) =9$ (see  \cite{Chlancy1977,Chvatal1972s,Radziszowski2014}). Thus the result holds for $q \in  \{2, 3, 4\}$, with equality for $q \in \{2,4\}$ corresponding to $G \cong qK_2$; with equality for $q =3$ corresponding to $G \cong K_3$ and $G \cong 3K_2$. Let $G$ be a graph with $q \geq 4$ edges and no isolated vertices. If $\Delta(G)=1$  then $G\cong qK_2$. We have $r(C_4, qK_2) \leq 2(q-2)+r(C_4,2K_2) = 2q+1$, and $r(C_4,qK_2) \geq 2q+1$ as a consequence of the two-coloring of $E(K_{2q})$ in which $R \cong  K_{1,2q-1}$. Thus $r(C_4, qK_2) = 2q + 1$ for $q > 1$. Without loss of generality, we can assume $G$ is connected since $r(C_4,G_1 \cup G_2) \leq r(C_4, G_1) + r(C_4, G_2)- 1$. 

\vspace{15pt}

\noindent \textbf{Case 1} If $\Delta(G) = 2$ 

\vspace{7pt}
\noindent Then $G$ a path or a cycle. Using known results, $r(C_4, P_n) \leq r(C_4, C_n) \leq n+2 $ (see \cite{Faudree1974,Goddard1994s,Karolyi2001,Rosta1973}) it follows that $r(C_4, G) \leq 2q$ for all such graphs $G$ with $q \geq 2$ edges and maximum degree $\Delta(G) = 2$. 

\vspace{15pt}

\noindent \textbf{Case 2} If $\Delta(G)  \geq 3$ and $\delta(G) > 1$ 
\vspace{7pt}

\noindent Given a vertex $v$ in $G$ of degree $\Delta(G)$, let $H = G \setminus v$ and let $N_v$, denote the neighborhood of $v$ in $G$.  By induction, then for any isolated vertex free graph $H'$ obtained from $G$ by removing $q'$ edges, we get that $r(C_4, H') \leq 2(q -q') + 1$. 

\vspace{4pt}

\noindent  In the first scenario, suppose that $(R, B)$ is a two-coloring of $E(K_{2q})$ in which there is no red $C_4$ and no blue copy of $G$. We claim that  $\delta(R) \geq 2\Delta(G) - 1$. 
\begin{center}

\begin{tikzpicture}[line cap=round,line join=round,>=triangle 45,x=1.0cm,y=1.0cm]
\clip(-6.799999999999999,0.07999999999999768) rectangle (6.980000000000006,6.359999999999995);
\draw (-3.78,3.76)-- (-1.18,3.14);
\draw (-3.78,3.76)-- (-1.18,1.28);
\draw (-3.78,3.76)-- (-1.16,3.94);
\draw (-4.459999999999998,3.819999999999996) node[anchor=north west] {$w$};
\draw (-3.78,3.76)-- (-1.2,2.26);
\draw (-3.78,3.76)-- (-1.16,4.72);
\draw [shift={(1.8264052525989425,3.403217216852088)},dash pattern=on 5pt off 5pt]  plot[domain=-0.1378721189177181:1.7913798804602392,variable=\t]({1.0*2.4972923401784173*cos(\t r)+-0.0*2.4972923401784173*sin(\t r)},{0.0*2.4972923401784173*cos(\t r)+1.0*2.4972923401784173*sin(\t r)});
\draw [shift={(2.0765988240615103,3.112437810945274)},dash pattern=on 5pt off 5pt]  plot[domain=4.317009984151047:6.259605175855038,variable=\t]({1.0*2.22401944982084*cos(\t r)+-0.0*2.22401944982084*sin(\t r)},{0.0*2.22401944982084*cos(\t r)+1.0*2.22401944982084*sin(\t r)});
\draw [shift={(8.225281854924484,3.3624441608168474)},dash pattern=on 5pt off 5pt]  plot[domain=2.7989393287942046:3.459142694246246,variable=\t]({1.0*7.373955721362437*cos(\t r)+-0.0*7.373955721362437*sin(\t r)},{0.0*7.373955721362437*cos(\t r)+1.0*7.373955721362437*sin(\t r)});
\draw [rotate around={90.0:(2.0,3.089999999999993)},dash pattern=on 5pt off 5pt] (2.0,3.089999999999993) ellipse (1.7016832944913034cm and 0.48869830647425566cm);
\draw (1.0600000000000038,5.579999999999995) node[anchor=north west] {$X=N_G(v)$};
\draw (2.8000000000000043,0.9399999999999973) node[anchor=north west] {Blue copy of graph H };
\draw (-6.579999999999999,5.039999999999996) node[anchor=north west] {$\deg_R(w) \leq2 \Delta(G)-2$};
\draw (3.7800000000000047,5.959999999999995) node[anchor=north west] {$|X| = \Delta(G)-2$};
\draw [->] (5.02,1.2) -- (3.98,1.76);
\draw [rotate around={88.84848110603572:(-1.1600000000000001,3.0899999999999954)}] (-1.1600000000000001,3.0899999999999954) ellipse (2.0844049248583247cm and 0.6189053972730015cm);
\begin{scriptsize}
\draw [fill=black] (-1.2,2.26) circle (1.5pt);
\draw [fill=black] (-1.14,1.28) circle (1.5pt);
\draw [fill=black] (-3.78,3.76) circle (1.5pt);
\draw [fill=black] (-1.18,3.14) circle (1.5pt);
\draw [fill=black] (-1.16,3.94) circle (1.5pt);
\draw [fill=black] (-1.16,4.72) circle (1.5pt);
\draw [fill=black] (2.0,3.6) circle (1.5pt);
\draw [fill=black] (1.98,2.66) circle (1.5pt);
\draw [fill=black] (2.02,4.42) circle (1.5pt);
\end{scriptsize}
\end{tikzpicture}
\end{center}
\begin{center}
\textit{Figure 1: If $w$ is a vertex of with degree $ \leq 2\Delta(G)-2$ in $R$ }
\end{center}

\noindent If $w$ is a vertex of with degree $ \leq 2\Delta(G)-2$ in $R$, we may delete this vertex and its neighborhood in $R$ and still have at least $2q -(2\Delta(G)-1) = 2(q - \Delta(G)) + 1$ vertices.  Thus, there is a blue copy of $H$ in the two colored complete graph that remains after $w$ and its neighborhood in $R$ are deleted. In this copy let $X$ denote the vertex set that plays the role of $N_v$. Since $w$ is adjacent to each vertex of $X$ in $B$ there is a blue copy of $G$, so the claim that $\delta(R) \geq 2\Delta(G) - 1$ is justified. 

\vspace{10pt}
\noindent In the next scenario, since $\Delta(G) \geq 3$, we have $2\Delta(G)-  1 \geq \Delta(G)+ 1$, so $\delta(R) \geq \Delta(G) + 1$. Delete an arbitrary vertex $w$ and exactly $\Delta(G) + 1$ of its neighbors in $R$. Let $Y$ denote the set of $\Delta(G) + 1$ neighbors chosen for deletion. Since $\Delta(G) \geq 3$, the complete graph that remains has at least $2q - (\Delta(G)+2) \geq 2(q - \Delta(G))+1$ vertices, so it must contain a blue copy of $H$. As before, let $X$ be the set that plays the role of $N_v$. Consider the edges between $Y$ and $X$. Since there is no blue copy of $G$, each vertex in $Y$ is adjacent in $R$ to at least one vertex of $X$. Since $|X| = \Delta(G)$ and $|Y| = \Delta(G)+1$ , there must be a vertex $x \in X$ adjacent in $R$ to two or more vertices of $Y$. 

\begin{center}

\begin{tikzpicture}[line cap=round,line join=round,>=triangle 45,x=1.0cm,y=1.0cm]
\clip(-6.799999999999999,0.3399999999999976) rectangle (6.980000000000006,6.2999999999999945);
\draw (-3.86,3.76)-- (-1.18,2.86);
\draw (-3.86,3.76)-- (-1.18,1.28);
\draw (-3.86,3.76)-- (-1.16,3.58);
\draw (-4.459999999999998,3.819999999999996) node[anchor=north west] {$w$};
\draw (-3.86,3.76)-- (-1.18,2.08);
\draw (-3.86,3.76)-- (-1.14,4.7);
\draw [shift={(1.7864052525989422,3.243217216852088)},dash pattern=on 5pt off 5pt]  plot[domain=-0.1378721189177181:1.7913798804602392,variable=\t]({1.0*2.4972923401784173*cos(\t r)+-0.0*2.4972923401784173*sin(\t r)},{0.0*2.4972923401784173*cos(\t r)+1.0*2.4972923401784173*sin(\t r)});
\draw [shift={(2.0824624624624626,3.066366366366366)},dash pattern=on 5pt off 5pt]  plot[domain=4.3064066347353265:6.206932295926013,variable=\t]({1.0*2.1838835809728936*cos(\t r)+-0.0*2.1838835809728936*sin(\t r)},{0.0*2.1838835809728936*cos(\t r)+1.0*2.1838835809728936*sin(\t r)});
\draw [shift={(8.062561174551384,3.34042181309718)},dash pattern=on 5pt off 5pt]  plot[domain=2.8112409549169204:3.4632863976885466,variable=\t]({1.0*7.21255620935653*cos(\t r)+-0.0*7.21255620935653*sin(\t r)},{0.0*7.21255620935653*cos(\t r)+1.0*7.21255620935653*sin(\t r)});
\draw (1.0800000000000038,5.279999999999995) node[anchor=north west] {$X=N_G(v)$};
\draw (2.8600000000000043,0.9199999999999973) node[anchor=north west] {Blue copy of graph H };
\draw (-6.419999999999999,6.119999999999995) node[anchor=north west] {$\deg_R(w) \geq 2\Delta(G) -1$};
\draw (1.760000000000004,4.339999999999995) node[anchor=north west] {$|X| = \Delta(G)$};
\draw [->] (5.02,1.2) -- (3.98,1.76);
\draw [rotate around={89.6350636642702:(-1.15,2.6699999999999986)}] (-1.15,2.6699999999999986) ellipse (1.7483053016917618cm and 0.7691368070268917cm);
\draw (-5.179999999999999,1.759999999999997) node[anchor=north west] {$|Y| = \Delta(G)+1$};
\draw [rotate around={90.0:(1.5000000000000002,3.190000000000004)},dash pattern=on 5pt off 5pt] (1.5000000000000002,3.190000000000004) ellipse (1.4160621461790026cm and 0.4274716386394044cm);
\draw (-4.8199999999999985,5.519999999999995) node[anchor=north west] {$\geq \Delta(G) +1$};
\begin{scriptsize}
\draw [fill=black] (-1.18,2.08) circle (1.5pt);
\draw [fill=black] (-1.18,1.28) circle (1.5pt);
\draw [fill=black] (-3.86,3.76) circle (1.5pt);
\draw [fill=black] (-1.18,2.86) circle (1.5pt);
\draw [fill=black] (-1.16,3.58) circle (1.5pt);
\draw [fill=black] (-1.12,5.3) circle (1.5pt);
\draw [fill=black] (-1.14,4.7) circle (1.5pt);
\draw [fill=black] (1.6,3.44) circle (1.5pt);
\draw [fill=black] (1.56,2.34) circle (1.5pt);
\draw [fill=black] (1.56,4.34) circle (1.5pt);
\end{scriptsize}
\end{tikzpicture}

\end{center}
\begin{center}
\textit{Figure 2: If $w$ is a vertex of with degree $ \geq 2\Delta(G)-1$ in $R$ }
\end{center}

\noindent Then $x$, $w$ and the two appropriate vertices of $Y$ yield a red $C_4$, a contradiction. 

\vspace{15pt}

\noindent \textbf{Case 3} If $\Delta(G)  \geq 3$ and $\delta(G) = 1$ 
\vspace{7pt}

\noindent It should be also noted that in the case $\Delta(G)  \geq 3$ and $\delta(G) = 1$
works though unless $G \setminus v$ has  isolated vertices.  However, connectivity of $G$ together with
the inequality $r(C_4, T_{q+1}) \leq \max \{4; q + 2; r(C_4,K_{1,q}))\leq 2q + 1$ (see \cite{Baskoro2006s,Erdos1988s}) gives that $G$ is a connected graph  which is not a tree.

\vspace{12pt}
\noindent Therefore, we are left with the case when $G$ has a vertex $v$ of degree $\Delta(G)$ such that $v$ is adjacent to $s$ pendent vertices of $G$, where $1 \leq s \leq q-3$. Let $H$ be the graph obtained by removing the $s$ pendent vertices from $G$. 

\vspace{12pt}
\noindent Suppose that $(R, B)$ is a two-coloring of $E(K_{2q})$ in which there is no red $C_4$ and no blue copy of $G$.  We claim that that $\Delta(R) \leq s$. If $w$ is a vertex of degree $\geq s+1$, let $Y$ consist of any $s+1$ red neighbors of $w$. Define $X=Y \cup \{w\}$. By induction hypothesis, there is a blue copy of $H$ in the two colored complete graph that remains after the vertices of $Y$ are deleted. In this copy let $x$ denote the vertex
set that plays the role of $N_v$. As before then $H$ can be extended to a blue $G$ as in order to avoid a red $C_4$,  $s$ vertices of $Y$ will be forced to be adjacent to $x$ in blue. Therefore, $\Delta(R) \leq s$. Let $w$ be a vertex with degree $\Delta(R)\leq s$. Let $Y$ be a set containing $N_R(w)$ along with $2s-1-\Delta(R)$ other vertices distinct from $w$. Define $X=Y \cup \{w \}$. 

\begin{center}

\begin{tikzpicture}[line cap=round,line join=round,>=triangle 45,x=1.0cm,y=1.0cm]
\clip(-8.5,1.1399999999999972) rectangle (6.160000000000005,7.619999999999994);
\draw (-3.5,3.56)-- (-1.22,2.28);
\draw (-4.219999999999999,4.499999999999995) node[anchor=north west] {$w$};
\draw (-3.5,3.56)-- (-1.18,3.22);
\draw (-3.5,3.56)-- (-1.16,4.28);
\draw (-6.18,2.899999999999995) node[anchor=north west] {$\deg_R(w) =\Delta(R)\leq s$};
\draw [dash pattern=on 5pt off 5pt] (-3.5,3.56)-- (-1.12,5.3);
\draw [dash pattern=on 5pt off 5pt] (-3.5,3.56)-- (-1.12,5.76);
\draw [dash pattern=on 5pt off 5pt] (-3.5,3.56)-- (-1.1,6.5);
\draw [rotate around={89.58179005850387:(-1.12,4.279999999999997)}] (-1.12,4.279999999999997) ellipse (2.807530703354169cm and 0.6117423070838824cm);
\draw (-3.5,3.56)-- (-1.14,3.78);
\draw (-8.200000000000001,6.379999999999994) node[anchor=north west] {$\deg_B(w) =2s-1-\Delta(R)\geq s-1$};
\draw (0.22000000000000236,3.5999999999999948) node[anchor=north west] {In the blue copy of };
\draw (3.780000000000004,3.5799999999999947) node[anchor=north west] {$H, x$};
\draw (0.28000000000000236,2.999999999999995) node[anchor=north west] {play the role of };
\draw (3.1400000000000032,2.999999999999995) node[anchor=north west] {$v \in G.$};
\draw (2.480000000000003,5.139999999999994) node[anchor=north west] {$x$};
\draw [shift={(-0.06332048104287226,-1.3017323051500944)},dash pattern=on 5pt off 5pt]  plot[domain=1.1699174252529474:2.1861273262842222,variable=\t]({1.0*5.953755715761224*cos(\t r)+-0.0*5.953755715761224*sin(\t r)},{0.0*5.953755715761224*cos(\t r)+1.0*5.953755715761224*sin(\t r)});
\begin{scriptsize}
\draw [fill=black] (-1.18,3.22) circle (1.5pt);
\draw [fill=black] (-1.18,2.22) circle (1.5pt);
\draw [fill=black] (-3.5,3.56) circle (1.5pt);
\draw [fill=black] (-1.12,5.3) circle (1.5pt);
\draw [fill=black] (-1.16,4.28) circle (1.5pt);
\draw [fill=black] (-1.1,6.5) circle (1.5pt);
\draw [fill=black] (-1.12,5.76) circle (1.5pt);
\draw [fill=black] (-1.14,3.78) circle (1.5pt);
\draw [fill=black] (2.28,4.16) circle (1.5pt);
\end{scriptsize}
\end{tikzpicture}

\end{center}
\begin{center}
\textit{Figure 3: If $w$ is a vertex of red degree at least two }
\end{center}

\vspace{7pt}
\noindent By induction hypothesis, there is a blue copy of $H$ in the two colored complete graph that remains after the vertices of $Y$ are deleted. If $s>1$ by the above argument, $x$ can be adjacent in blue to at most $s-2$ vertices of $Y$  and can be adjacent in red to at most $s$ vertices in $Y$(as $ \Delta(R)\leq s$). A contradiction as $|Y|=2s-1$. If $s=1$ as $(w,x)$ is blue, we get a copy of $G$, a contradiction.

\vspace{10pt}
\noindent Thus $r(C_4, G) \leq 2q$ for every graph $G$ with $q$ edges and with no isolated vertices other than $2K_2$, and the proof is complete. 
\end{proof}
\vspace{15pt}

\section{An upper bound to Ramsey number $r(C_4,G)$ if $G$ is connected.}

\vspace{10pt}

\begin{theorem}
\label{t2}
For every isloated vertex free graph $G$ with $q \geq 2$ edges and $p \geq 3$ vertices,  $ $ $r(C_4, G) \leq 2p+ q -  2$. Equality holds if $G \cong K_3$.
\end{theorem}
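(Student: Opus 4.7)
The plan is to proceed by induction on $q$, mirroring the argument of Theorem~\ref{t1} but with the target bound changed from $2q+1$ to $2p+q-2$. The base cases $q\in\{2,3,4\}$ are verified against the tabulated Ramsey values at the start of the proof of Theorem~\ref{t1}; in particular $r(C_4,K_3)=7=2\cdot 3+3-2$ realises the claimed equality. Since $2q+1\leq 2p+q-2$ whenever $q\leq 2p-3$, Theorem~\ref{t1} already handles that range, and it remains to treat the regime $q\geq 2p-2$. In this regime, disjoint unions of paths and cycles (which have $q\leq p$) and matchings $qK_2$ (which have $p=2q$) are excluded for $p\geq 3$, so $\Delta(G)\geq 3$.

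Let $v$ be a vertex of maximum degree $d=\Delta(G)$, let $s\geq 0$ be the number of pendant neighbours of $v$ in $G$, and set $H=G\setminus v$ and $H'=G\setminus\bigl(\{v\}\cup\{\text{pendant neighbours of }v\}\bigr)$. Since $d\leq p-1$ and $q\geq 2p-2$, we have $q-d\geq p-1\geq 2$, so $H'$ is non-empty, has $q-d\geq 2$ edges, and carries no isolated vertices (pendants of $v$ are adjacent only to $v$, so each non-pendant neighbour of $v$ retains at least one neighbour in $H'$, and every other vertex has the same neighbourhood as in $G$). Applying the inductive hypothesis to $H'$ together with the elementary inequality $r(C_4,F\cup sK_1)\leq r(C_4,F)+s$ yields
\[
r(C_4,H)\leq r(C_4,H')+s\leq 2(p-1-s)+(q-d)-2+s=2p+q-d-s-4.
\]

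Now the two-step argument of Theorem~\ref{t1} transfers verbatim with updated constants. Suppose for contradiction that there is a $2$-colouring $(R,B)$ of $K_N$ with $N=2p+q-2$ containing no red $C_4$ and no blue $G$. First, $\delta(R)\geq d+s+2$: if some $w$ satisfied $\deg_R(w)\leq d+s+1$, then removing $\{w\}\cup N_R(w)$ would leave at least $N-(d+s+2)=r(C_4,H)$ vertices, which contain a blue copy of $H$ whose image $X$ of $N_G(v)$ has size $d$ and lies outside $N_R(w)$; then $w$ would blue-extend this to a blue $G$, a contradiction. Second, pick any vertex $w$ and any $Y\subseteq N_R(w)$ with $|Y|=d+s+1$; removing $\{w\}\cup Y$ leaves exactly $r(C_4,H)$ vertices, which host a blue $H$ with $N_G(v)$-image $X$ of size $d$. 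Since no $y\in Y$ can play the role of $v$, each $y\in Y$ has a red edge into $X$; pigeonholing $d+s+1$ red edges into $|X|=d$ sinks produces $y_1,y_2\in Y$ sharing a common red neighbour $x\in X$, and $w\,y_1\,x\,y_2\,w$ is a red $C_4$, the desired contradiction.

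The main obstacle I anticipate is confirming that the induction never runs into degenerate boundary configurations (such as $G$ being a star, or $H'$ having too few edges, or $H'$ having isolated vertices). Fortunately, the single inequality $q-d\geq p-1\geq 2$, valid throughout the regime $q\geq 2p-2$, rules all of these out in one stroke: it forces $H'$ to have at least two edges, to be non-empty, and (via the pendant-neighbour analysis above) to be isolated-vertex-free. The complementary range $q\leq 2p-3$ where the inductive bound would be weaker than needed is disposed of immediately by Theorem~\ref{t1} in the initial reduction, so no case analysis beyond the one above is required.
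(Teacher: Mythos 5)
Your proof is correct. The engine is the one the paper uses for both theorems: delete a maximum-degree vertex $v$, invoke induction to guarantee a blue copy of the remainder, prove a lower bound on $\delta(R)$, and pigeonhole red edges from a deleted red neighbourhood $Y$ into the image $X$ of $N_G(v)$ to produce a red $C_4$ through $w$. But you organise it differently in two genuine ways. First, you dispose of the whole range $q\le 2p-3$ by citing Theorem~\ref{t1} (since $2q+1\le 2p+q-2$ there), which eliminates paths, cycles, stars and all sparse graphs at once and leaves only $q\ge 2p-2$, where $\Delta(G)\ge 3$ and $q-\Delta(G)\ge p-1\ge 2$ come for free; the paper instead reruns the full case analysis for all $q\ge 5$. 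Second, and more substantively, you give a single uniform treatment of pendant neighbours of the hub: delete them together with $v$, apply induction to the isolated-vertex-free $H'$, restore the $s$ isolated vertices via $r(C_4,F\cup sK_1)\le r(C_4,F)+s$ (valid because $r(C_4,F)\ge|V(F)|$), and inflate the deleted red neighbourhood from $\Delta+1$ to $\Delta+s+1$ so the pigeonhole into $|X|=\Delta$ still fires. The paper's proof of Theorem~\ref{t2} instead branches when $\delta(G)=1$: it deletes a degree-one vertex of $G$, removes two red neighbours $w_1,w_2$ of some vertex $u$, finds a blue $G\setminus v$, and forces the image of $v$'s unique neighbour to be red-adjacent to both $w_1$ and $w_2$. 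Your version covers $\delta(G)=1$ and $\delta(G)\ge 2$ simultaneously and avoids the paper's delicate branching on whether $G\setminus v$ has isolated vertices, at the modest cost of the auxiliary isolated-vertex inequality; both routes are sound, and your computation $N-(\Delta+s+2)=2p+q-\Delta-s-4\ge r(C_4,H)$ checks out.
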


\begin{proof} It is easy to verify the theorem for $q \leq 4$ by the results of the previous 
section. If $\Delta(G) \leq 2$ then $G$ is a path or a cycle, and using known results, again 
it is easy to verify that the theorem is true in this case. So it suffices to show 
$r(C_4, G) \leq q + 2p - 3$, if $q \geq 5$. Now assume $\Delta(G) \geq 3$. Given a vertex $v$ in $G$ of degree $\Delta(G)$, let $H =G \setminus v$ and let $N_v$, denote the neighborhood of $v$ in $G$. First assume that $H$ has no isolated vertices. By induction, then for any isolated vertex free graph $H'$ obtained from $G$ by removing $q'$ edges (and the $p'$ corresponding vertices), we get that  $r(C_4, H') \leq (q -q') + 2(p-p')-3$. In particular, $r(C_4, H) \leq (q-\Delta(G) ) +2(p-1)-3$. Suppose that $(R, B)$ is a two-coloring of $E(K_{q+2p-3})$ in which there is no red $C_4$ and no blue copy of $G$. then using a similar argument as in the last proof we would get a contradition for all possible cases expect the third case when $\Delta(G)  \geq 3$ and $\delta(G) = 1$.

\vspace{7pt}

\noindent In this case let $v\in G$ represent vertex of degree 1 in $G$, and $N_v$ consist of $v_1$. Clearly,   $|\delta(R)| \geq 1$. First assume that, $|\Delta(R)|=1$. Let $u \in K_{2q}$  be a vertex with degree 1 and suppose it it adjacent to $w$ in red.  Then the graph  obtained by removing $u$ and $w$ from $R$ (say $K \cong K_{2(q-1)}$)  by induction hypothesis will have a blue copy of $G \setminus v$ in it. As before let $X=\{x\}$ be the set that play the role of $N_v$. As $u$ is adjacent to all vertices of $K$ we will get a blue copy of  $G$ in $K_{2q}$. Therefore, we may assume that $|\Delta(R)|>1$. Let $u \in K_{2q}$  be a vertex with at least two neighbors in red say $w_1$ and $w_2$. Let $Y =\{ w_1, w_2 \}$. Then as before the complete graph obtained by removing $Y\cup \{u\}$ will contain a blue copy of $G\setminus v$ as illustrated in the following figure(since $H$ has $p-1$ vertices and $q-1$ edges).

\noindent Since $|N_v| = 1$, $|Y| = 2$ and there is no blue copy of $G$, the vertex $x$ must be adjacent in $R$ to the vertices $w_1$ and $w_2$. But then $u w_1 x w_2 x$ will yield a red $C_4$, a contradiction.

\begin{center}

\begin{tikzpicture}[line cap=round,line join=round,>=triangle 45,x=1.0cm,y=1.0cm]
\clip(-6.799999999999999,0.15999999999999764) rectangle (7.000000000000006,6.2999999999999945);
\draw (-3.72,3.46)-- (-1.22,2.28);
\draw (-4.459999999999998,3.819999999999996) node[anchor=north west] {$w$};
\draw (-3.72,3.46)-- (-1.18,3.22);
\draw (-3.72,3.46)-- (-1.14,4.7);
\draw [shift={(1.7864052525989422,3.243217216852088)},dash pattern=on 5pt off 5pt]  plot[domain=-0.1378721189177181:1.7913798804602392,variable=\t]({1.0*2.4972923401784173*cos(\t r)+-0.0*2.4972923401784173*sin(\t r)},{0.0*2.4972923401784173*cos(\t r)+1.0*2.4972923401784173*sin(\t r)});
\draw [shift={(2.0824624624624626,3.066366366366366)},dash pattern=on 5pt off 5pt]  plot[domain=4.3064066347353265:6.206932295926013,variable=\t]({1.0*2.1838835809728936*cos(\t r)+-0.0*2.1838835809728936*sin(\t r)},{0.0*2.1838835809728936*cos(\t r)+1.0*2.1838835809728936*sin(\t r)});
\draw [shift={(8.062561174551384,3.34042181309718)},dash pattern=on 5pt off 5pt]  plot[domain=2.8112409549169204:3.4632863976885466,variable=\t]({1.0*7.21255620935653*cos(\t r)+-0.0*7.21255620935653*sin(\t r)},{0.0*7.21255620935653*cos(\t r)+1.0*7.21255620935653*sin(\t r)});
\draw (1.0800000000000038,5.279999999999995) node[anchor=north west] {$X=\{x\}$};
\draw (2.3200000000000043,0.7199999999999974) node[anchor=north west] {Blue copy of graph G \textbackslash  v};
\draw (-6.179999999999999,4.8799999999999955) node[anchor=north west] {$\deg_R(w) \geq 2$};
\draw [->] (5.02,1.2) -- (3.98,1.76);
\draw [rotate around={89.6350636642702:(-1.15,2.6699999999999986)}] (-1.15,2.6699999999999986) ellipse (1.7483053016917618cm and 0.7691368070268917cm);
\draw (-4.679999999999998,2.0999999999999965) node[anchor=north west] {$|Y| =\{w_1,w_2\}$};
\draw [rotate around={90.0:(1.58,3.2099999999999977)},dash pattern=on 5pt off 5pt] (1.58,3.2099999999999977) ellipse (1.4160621461789815cm and 0.4274716386393988cm);
\draw (-1.18,3.22)-- (1.58,3.2);
\draw (1.58,3.2)-- (-1.18,2.22);
\draw (-1.2599999999999971,4.119999999999996) node[anchor=north west] {$w_1$};
\draw (-1.3199999999999972,2.2199999999999966) node[anchor=north west] {$w_2$};
\draw (1.400000000000004,4.019999999999996) node[anchor=north west] {$x$};
\begin{scriptsize}
\draw [fill=black] (-1.18,3.22) circle (1.5pt);
\draw [fill=black] (-1.18,2.22) circle (1.5pt);
\draw [fill=black] (-3.72,3.46) circle (1.5pt);
\draw [fill=black] (-1.12,5.3) circle (1.5pt);
\draw [fill=black] (-1.14,4.7) circle (1.5pt);
\draw [fill=black] (1.58,3.2) circle (1.5pt);
\end{scriptsize}
\end{tikzpicture}

\end{center}
\begin{center}
\textit{Figure 4: If $w$ is a vertex of red degree at least two }
\end{center}

\noindent Thus, $r(C_4, G) \leq q + 2p - 3$ for every graph $G$ with $q$ edges and with no isolated vertices other than $K_3$, and the proof is complete. 
\end{proof}

\vspace{8pt}

\section{An Upper Bound for the Ramsey Number $r(K_{2,k}, G)$ where $k\geq 2$}

\begin{lemma} $r(K_{2,k}, G) \leq kq + 1$ if $G$ is a path, star or triangle.
\end{lemma}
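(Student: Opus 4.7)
I would treat each of the three graph families separately, since each calls for a different device. For the path case $G = P_n$ with $q = n-1 \geq 2$, I would take any 2-colouring $(R,B)$ of $E(K_{kq+1})$ and look at a longest blue path $v_1 v_2 \cdots v_m$. If $m \geq n$ we are done; otherwise $m \leq n-1$, and every vertex $u$ outside the path must satisfy $uv_1, uv_m \in R$, since otherwise we could prepend or append $u$ and extend the path. The number of outside vertices is at least $kq+1-(n-1) = (k-1)q+1 \geq k$ (using $q \geq 2$ and $k \geq 2$), so the two endpoints $v_1, v_m$ share at least $k$ red neighbours, producing a red $K_{2,k}$. The degenerate subcase $m=1$ (no blue edge at all) is immediate, because $K_{kq+1}$ itself contains $K_{2,k}$ whenever $kq+1 \geq k+2$, which holds for $q \geq 2$.

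For the star case $G = K_{1,q}$, I would use a double-counting argument on red cherries. If $K_{kq+1}$ contains no blue $K_{1,q}$ then $\Delta(B) \leq q-1$ and hence $\delta(R) \geq (k-1)q+1$. By the monotonicity of $\binom{x}{2}$,
\[
    (kq+1)\binom{(k-1)q+1}{2} \;\leq\; \sum_{v}\binom{d_R(v)}{2}.
\]
On the other hand, the absence of a red $K_{2,k}$ forces every pair of vertices to have at most $k-1$ common red neighbours, so
\[
    \sum_{v}\binom{d_R(v)}{2} \;\leq\; (k-1)\binom{kq+1}{2}.
\]
Cancelling the common factors $(kq+1)$ and $(k-1)q$ in the combined inequality reduces it to $(k-1)q+1 \leq k$, i.e.\ $q \leq 1$, contradicting $q \geq 2$.

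For the triangle case $G = K_3$ (so $q = 3$), the key observation is that in a blue-$K_3$-free colouring each blue neighbourhood $N_B(v)$ spans a red clique; to avoid a red $K_{2,k}$ this clique has at most $k+1$ vertices, so $\Delta(B) \leq k+1$ and $\delta(R) \geq 2k-1$. Applying the same cherry count then yields
\[
    (3k+1)\binom{2k-1}{2} \;\leq\; \sum_v\binom{d_R(v)}{2} \;\leq\; (k-1)\binom{3k+1}{2},
\]
which simplifies to $4k-2 \leq 3k$, i.e.\ $k \leq 2$. Hence for $k \geq 3$ we obtain a contradiction directly. The main obstacle is the remaining case $k = 2$: the cherry inequality is tight and gives no contradiction by itself. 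I would handle it by invoking the known value $r(C_4, K_3) = 7$ that is already cited in the proof of Theorem~\ref{t1}; alternatively, equality throughout would force the red graph on seven vertices to be a $3$-regular strongly regular graph with parameters $(7,3,0,1)$, which does not exist, and so the strict inequality needed is recovered.
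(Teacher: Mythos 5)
Your proof is correct, but it takes a genuinely different route from the paper. The paper disposes of all three families by citation: H\"aggkvist's bound $r(K_{2,k},P_{q+1})\leq k+q+1$ for paths, the book Ramsey bound $r(B_k,K_{1,q})\leq\max\{2q+1,\lfloor\frac{3}{2}(k+q-1)\rfloor+1\}$ (together with the $C_4$ case from Theorem~\ref{t1}) for stars, and Rousseau--Sheehan's $r(B_k,B_1)\leq 2k+3$ for the triangle. You instead give self-contained elementary arguments: a longest-blue-path argument for paths (both endpoints of a maximal blue path are red-joined to every vertex off the path, and there are at least $(k-1)q+1\geq k$ such vertices), and a codegree double count $\sum_v\binom{d_R(v)}{2}\leq(k-1)\binom{kq+1}{2}$ for stars and triangles. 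The computations check out: the star case cancels to $q\leq 1$ and the triangle case to $k\leq 2$, with $k=2$ recovered from the already-cited value $r(C_4,K_3)=7$. (In your alternative equality analysis for $k=2$, the quickest contradiction is parity --- tightness forces a $3$-regular red graph on $7$ vertices, impossible by the handshake lemma; your parameters $(7,3,0,1)$ are not quite what the tight codegree count gives, since it forces \emph{every} pair, adjacent or not, to have exactly one common red neighbour, but the non-existence conclusion is correct either way, so this is only a cosmetic slip and the fallback citation covers it.) What your approach buys is independence from the book Ramsey literature and from the unverified chain of inequalities in the paper's star case; what it costs is length. One small point to make explicit: like the paper, you need $q\geq 2$ (the bound fails for $G=K_2$, where $r(K_{2,k},K_2)=k+2$), and both your path and star arguments invoke $q\geq 2$ at their final steps, so that hypothesis should be stated.
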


\noindent 

\begin{proof}\noindent We showed before that  $r(K_{2,2}, K_{1,q})=r(C_4, K_{1,q}) \leq 2q + 1$ if $q \geq 2$. Also \cite{Haggkvist1989} gives
 $r(K_{2,k}, K_{1,2}) = r(K_{2,k}, P_3) \leq 2k + 1$. And thus we can conclude the result is true 
for stars as  $r(K_{2,k}, K_{1,q}) \leq r(B_k,K_{1,q}) \leq \max\{2q + 1, \lfloor \frac{3}{2}(k + q-1) \rfloor +1 \}\leq kq+1$ for $k \geq 3$ and $q \geq 3$. For paths and triangles the result follows directly from  $r(K_{2,k}, P_{q+1}) \leq k+q+1$ \cite{Haggkvist1989} and $r(B_k, B_1)\leq  2k + 3$ \cite{Rousseau1978s} respectively. 
\end{proof}

\begin{lemma}  For every isolated vertex free graph $G$ with $q \geq 2$ edges, $r(K_{2,k}, G) \leq kq +1$ if $k \geq 2$.
\end{lemma}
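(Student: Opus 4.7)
The plan is to argue by induction on the number of edges $q$, using the previous lemma to cover paths, stars, and triangles as base cases, together with direct verification for a small handful of $G$ with $q \in \{2,3\}$ not covered by it. I may assume $G$ is connected: if $G = G_1 \cup G_2$ has $q_i$ edges in component $G_i$, the standard union bound $r(K_{2,k}, G_1 \cup G_2) \leq r(K_{2,k}, G_1) + r(K_{2,k}, G_2) - 1$ combined with the inductive hypothesis yields $r(K_{2,k}, G) \leq kq + 1$. If $\Delta(G) \leq 2$ then $G$ is a path or a cycle; paths are handled by the previous lemma while cycles fall comfortably under the known bound $r(K_{2,k}, C_n) \leq n + k + 1$, both well below $kq + 1$ for $k, q \geq 2$.

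For the core case $\Delta(G) \geq 3$, fix a vertex $v$ of maximum degree, set $H = G \setminus v$ with neighborhood $N_v$, and first assume that $H$ has no isolated vertex, so by the inductive hypothesis $r(K_{2,k}, H) \leq k(q - \Delta(G)) + 1$. For contradiction, suppose a two-coloring $(R, B)$ of $E(K_{kq+1})$ has no red $K_{2,k}$ and no blue $G$. A first observation is that $\delta(R) \geq k\Delta(G)$: otherwise a vertex $w$ with $\deg_R(w) \leq k\Delta(G) - 1$ leaves, after deleting $\{w\} \cup N_R(w)$, at least $k(q - \Delta(G)) + 1$ vertices, which contain a blue copy of $H$ by induction. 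The $\Delta(G)$ vertices $X$ playing the role of $N_v$, together with $w$ (blue to everything remaining), would then form a blue $G$, a contradiction.

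Next, for any $w$ pick $Y \subseteq N_R(w)$ with $|Y| = k\Delta(G) - 1$. The remaining $k(q - \Delta(G)) + 1$ vertices contain a blue $H$; let $X$ be the $\Delta(G)$ vertices in the role of $N_v$. A double count of red edges between $X$ and $Y$ now clashes. On one hand, any $x \in X$ with $k$ or more red neighbors in $Y$ forms a red $K_{2,k}$ together with $w$, so the total number of red $X$--$Y$ edges is at most $\Delta(G)(k-1) = k\Delta(G) - \Delta(G)$. On the other hand, any $y \in Y$ that is blue to every $x \in X$ could substitute for $v$ in the blue $H$ to produce a blue $G$, so each $y$ contributes at least one red edge to $X$, giving a total of at least $|Y| = k\Delta(G) - 1$. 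Since $\Delta(G) \geq 3$, these two bounds are incompatible.

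The remaining case, $\Delta(G) \geq 3$ with $\delta(G) = 1$ so that the chosen $v$ has some $s \geq 1$ pendant neighbors and $G \setminus v$ has isolated vertices, is where I expect the real bookkeeping to lie. I would mirror Case 3 of Theorem~1: take $H$ to be $G$ with the $s$ pendants removed, first show $\Delta(R) \leq s$ by extending any would-be high-red-degree vertex to a blue $G$ through the pendants (forcing either a blue extension or a red $K_{2,k}$), then pick $w$ of red degree $\Delta(R)$, enlarge $N_R(w)$ to a set $Y$ whose size is calibrated in terms of $k$, $s$, and $\Delta(R)$, and rerun the bipartite red/blue count between $X$ and $Y$ with the two thresholds--each $x$ shares at most $k-1$ red neighbors with any other fixed vertex, and each $y$ must eventually be forced into a role preventing a blue $G$--adjusted so that the same style of inequality clash goes through. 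Tuning these thresholds precisely, and verifying the small-$q$ base cases, is the one delicate point; the rest is a clean $k$-analog of the $C_4$ proof.
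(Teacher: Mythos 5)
Your proposal follows essentially the same route as the paper: induction on $q$ after reducing to connected $G$ that is not a path, star, or triangle, a minimum-red-degree claim proved by deleting a maximum-degree vertex $v$ and finding a blue copy of $H=G\setminus v$ in what remains, and then a pigeonhole/double-count between $X$ and $Y$ forcing a red $K_{2,k}$ (your set size $|Y|=k\Delta-1$ differs from the paper's $(k-1)\Delta+1$, but both make the count close). The pendant-vertex case you defer is treated with the same brevity in the paper itself, which simply invokes the argument of Theorem~1, Case~3 with the threshold $\Delta(R)\le s+k-2$, so your sketch matches the paper's level of detail and contains no substantive gap beyond what the paper also leaves to the reader.
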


\begin{proof} We will use induction on $q$. The result is true for $k = 2$ or $q \in \{2, 3\}$. Also without loss of generality $G$ is connected and the result is true if $G$ is a path, star or triangle (these follow from previous lemmas). Thus we can restrict our attention to $G$ such that $G$ is not a path, star or triangle satisfying $\Delta(G)>1$.  Using the argument in Theorem 1:case 3, we may assume that $\delta >1$(by considering the two cases, $\Delta(R)\leq s+k-2$ and $\Delta(R)> s+k-2$). Thus, given a vertex in $G$ of degree $\Delta(G)$, let $H = G \setminus v$ and let $N_v$ denote the neighborhood of $v$ in $G$. Then $H$ has $q-\Delta$ edges. Suppose that $(R, B)$ is a two-coloring of $E(K_{kq+1}$) in which there is no red $K_{2,k}$ and no blue copy of $G$. We claim that that $\delta(R) \geq (k- 1)\Delta + 1$. If $w$ is a vertex of with degree $\leq(k -1)\Delta$ in $R$, we may delete this vertex and its neighborhood in $R$ and still have at least $kq+1-((k- 1)\Delta +1) \geq \max \{ k(q- \Delta)+1,p- 1 \}$ vertices(as $G$ is not a star). Thus there is a blue copy of $H$ in the two-colored complete graph that remains after $w$ and its neighborhood in $R$ are deleted. In this copy let $X$ denote the vertex set that plays the role of $N_v$. Since $w$ is adjacent to each vertex of $X$ in $B$ there is a blue copy of $G$, so the claim that $\delta(R) \geq (k -1)\Delta +1$ is justified. Delete an arbitrary vertex $w$ and exactly $(k -1)\Delta + 1$ of its neighbors in $R$. Let $Y$ denote the set of $(k -1)\Delta + 1$  neighbors chosen for deletion. As before the complete graph that remains has at least $\max \{ k(q-\Delta) + 1,p -1\}$ vertices, so it must contain
 a blue copy of $H$. As before, let $X$ be the set that plays the role of $N_v$. Consider
 the edges between $Y$ and $X$. Since there is no blue copy of $G$, each vertex in $Y$ is
 adjacent in $R$ to at least one vertex of $X$. Since $|X|=\Delta$ and there must be a  vertex $x \in X$ adjacent in $R$ to $k$ or more vertices of $Y$. Then $x$  and the $k$ appropriate vertices of $Y$ yield a red $K_{2,k}$, a contradiction. 
\end{proof}

\begin{theorem} For every graph $G$ with no isolated vertices, $r(K_{2,k},G) \leq kq + 2$ if $k \geq 3$ and equality holds if  $G \cong K_2$.
\end{theorem}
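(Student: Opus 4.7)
The plan is to handle this essentially as a corollary of the preceding lemma by splitting on $q = |E(G)|$.

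If $q \geq 2$, the preceding lemma already gives
\[
r(K_{2,k}, G) \leq kq + 1 \leq kq + 2,
\]
and in fact the inequality is strict, so there is nothing more to do in this range.

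The only remaining case is $q = 1$. Since $G$ has no isolated vertices, a single edge forces $G \cong K_2$, so the statement reduces to computing $r(K_{2,k}, K_2)$ exactly and checking it equals $k+2 = kq+2$. For the upper bound, take any red/blue colouring of $E(K_{k+2})$; if there is any blue edge at all we have our blue $K_2$, and otherwise every edge is red, so $K_{k+2}$ itself is monochromatically red and contains a red $K_{2,k}$ (since $|V(K_{2,k})|=k+2$). For the matching lower bound, colour every edge of $K_{k+1}$ red: this colouring has no blue edge and, since $K_{k+1}$ has only $k+1 < k+2$ vertices, no red $K_{2,k}$ either. Hence $r(K_{2,k}, K_2) = k+2$, which gives both the bound and the equality statement for $G \cong K_2$.

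There is no real obstacle here; the theorem is almost a repackaging of the preceding lemma together with the trivial base case $G = K_2$, whose purpose is precisely to explain why the bound must be relaxed from $kq+1$ to $kq+2$ once $q=1$ is allowed. The one thing worth underlining in the write-up is that $K_2$ is genuinely the unique extremal graph in the statement, because for all $q \geq 2$ the preceding lemma yields a bound strictly smaller than $kq+2$.
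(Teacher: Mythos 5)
Your proof is correct and follows essentially the same route as the paper: for $q\geq 2$ the preceding lemma gives $r(K_{2,k},G)\leq kq+1<kq+2$, and the case $q=1$ reduces to the exact value $r(K_{2,k},K_2)=k+2$, which the paper simply cites and you verify directly. Your explicit check of both bounds for $r(K_{2,k},K_2)$ is a welcome filling-in of a detail the paper leaves unstated.
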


\begin{proof}
\noindent This result follows from the previous lemma together $r(K_{2,k}, K_2) = k+2$. 
\end{proof}

\end{document}